\newtheorem{theorem}{Theorem} 
\newtheorem{corollary}[theorem]{Corollary}
\newtheorem{exam}{Example}
\newtheorem*{rem}{Remarks}
\newcommand\commentout[1]{}
\newcommand\ZZ{\mathbb{Z}}
\newcommand\bm{\mathbf{m}}
\begin{document}

\title{Frobenius Coin-Exchange Generating Functions}

\author{Leonardo Bardomero}
\author{Matthias Beck}
\address{Department of Mathematics\\
         San Francisco State University\\
         San Francisco, CA 94132\\
         U.S.A.}
\email{[josebardomero,becksfsu]@gmail.com}


\begin{abstract}
We study variants of the \emph{Frobenius coin-exchange problem}: Given $n$ positive relatively prime parameters, what is the largest integer that cannot be represented as a nonnegative integral linear combination of the given integers? 
This problem and its siblings can be understood through generating functions with 0/1 coefficients according to whether or not an integer is representable. In the 2-parameter
case, this generating function has an elegant closed form, from which many corollaries follow, including a formula for the Frobenius problem.
We establish a similar closed form for the generating function indicating all integers with exactly $k$ representations, with similar wide-ranging corollaries.
\end{abstract}

\keywords{Linear Diophantine problem of Frobenius, coin-exchange problem, Frobenius number, generating function.}

\thanks{We thank Federico Ardila, Yitwah Cheung, two anonymous referees, and the \textsc{Monthly} editors for insightful comments about this work.}

\subjclass[2010]{Primary 11D07; Secondary 05A15, 05A17.}

\date{13 May 2019, to appear in the \emph{American Mathematical Monthly}}

\maketitle


\section{The Story}

Imagine we replace the penny in the US currency coins by a 7-cent coin.  One might argue that the resulting new coin system is a
bit less practical than the old, but it is also more (mathematically) interesting: now there are some cent amounts (such as 3 and
8 cents) that cannot be made up using our coins. On the other hand, it is a charming exercise---because 5 and 7 happen to be
relatively prime---that every sufficiently large amount of money \emph{can} be changed; in fact, there are twelve cent amounts that
cannot be made up with 5- and 7-cent coins, the largest being 23 cents.
(The simple fact that 5 and 7 are relatively prime is crucial---if the greatest common divisor of our coin denominations were $d$, we
could not change any amount that is not a multiple of~$d$.)

Naturally, nothing stops us (mathematicians) from generalizing this setting, and so for fixed
positive relatively prime integers $a_1, a_2, \dots, a_n$, (that is, $\gcd(a_1, a_2, \dots, a_n) = 1$),
we say that a nonnegative integer $x$ is \emph{$\left( a_1, a_2, \dots, a_n \right)$-representable} if 
\begin{equation}\label{repequ}
  x = m_1 a_1 + m_2 a_2 + \dots + m_n a_n
\end{equation} 
for some $m_1, m_2, \dots, m_n \in \ZZ_{ \ge 0 }$. 
Let $R_0 \left( a_1, a_2, \dots, a_n \right)$ be the set of all positive integers that are not $\left( a_1, a_2, \dots, a_n \right)$-representable.
Because $a_1, a_2, \dots, a_n$ are relatively prime, $R_0 \left( a_1, a_2, \dots, a_n 
\right)$ is finite, and so three natural questions about this set are:
\begin{itemize}
  \item What is the largest number $g_0 \left( a_1, a_2, \dots, a_n \right)$ in $R_0 \left( a_1, a_2, \dots, a_n 
\right)$?
  \item What is the cardinality $c_0 \left( a_1, a_2, \dots, a_n \right)$ of $R_0 \left( a_1, a_2, \dots, a_n 
\right)$?
  \item What is the sum $s_0 \left( a_1, a_2, \dots, a_n \right)$ of all elements in $R_0 \left( a_1, a_2, \dots, a_n 
\right)$?
\end{itemize}
The first question is the \emph{linear Diophantine problem of Frobenius} (it has many alternative names, such as the
\emph{coin-exchange problem} and the \emph{chicken nuggets problem}), and its solution $g_0 \left( a_1, a_2, \dots, a_n \right)$ is called the \emph{Frobenius number} of the
parameter set $\{ a_1, a_2, \dots, a_n \}$.
One of the appealing aspects of the Frobenius problem and its variants is that they can be easily explained. 
There are many reasons to be interested in the set $R_0 \left( a_1, a_2, \dots, a_n 
\right)$ for fixed $a_1, a_2, \dots, a_n$; the mathematical basis is the
semigroup $S_0 \left( a_1, a_2, \dots, a_n \right)$ generated by $a_1, a_2, \dots, a_n$, and then $R_0 \left( a_1, a_2, \dots, a_n \right) = \ZZ_{ \ge 0 } \setminus S_0 \left( a_1, a_2, \dots, a_n \right)$. 
For details about the Frobenius problem, including numerous applications, we recommend 
two classic \textsc{Monthly} articles~\cite{nijenhuis,wilfmonthly} and the monograph~\cite{ramirezbook}.

Our three questions about $R_0 \left( a_1, a_2, \dots, a_n 
\right)$ are, in general, wide open, but they have strikingly simple answers for $n=2$:
\begin{itemize}
  \item $g_0 (a, b) = (a - 1) (b - 1) - 1$;
\vspace{.1in}
  \item $c_0 (a, b) = \frac 1 2 (a - 1) (b - 1)$;
\vspace{.1in}
  \item $s_0 (a, b) = \frac{ 1 }{ 12 } (a - 1) (b - 1) ( 2 a b - a - b - 1)$.
\end{itemize}
The first two formulas go back to at least Sylvester; his paper \cite{sylvester} gives both $c_0(a,b)$ and a clear indication
that he knew $g_0(a,b)$. The third formula is much younger and seems to have first been proved by Brown--Shiue~\cite{brownshiue}.
One can derive all three formulas at once from the following generating function identity.

\begin{theorem}\label{thm:Sgenfct}
Given relatively prime positive integers $a$ and $b$, let
$S_0 (a,b) = \{ ma+nb : \, m, n \in \ZZ_{ \ge 0 } \}$. Then
\[
  \sum_{ j \in S_0 (a,b) } z^j \ = \ \frac{ 1 - z^{ ab } }{ (1-z^a) (1-z^b) } \, .
\]
\end{theorem}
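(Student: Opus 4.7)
The plan is to prove the identity by first manipulating the right-hand side into a transparent form as a sum of monomials, and then identifying that sum of monomials directly with $\sum_{j \in S_0(a,b)} z^j$. The key algebraic observation is that $\frac{1 - z^{ab}}{1 - z^a} = 1 + z^a + z^{2a} + \cdots + z^{(b-1)a}$, so that
\[
  \frac{ 1 - z^{ ab } }{ (1-z^a) (1-z^b) } \ = \ \sum_{ m = 0 }^{ b-1 } \sum_{ n \ge 0 } z^{ ma + nb } \, .
\]
Thus the theorem will reduce to showing that this double sum is the characteristic generating function of $S_0(a,b)$.

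For this I would establish two claims. First, \emph{existence of a canonical representation}: every $j \in S_0(a,b)$ can be written as $j = ma + nb$ with $0 \le m \le b-1$ and $n \ge 0$. Starting from any representation $j = m'a + n'b$ with $m', n' \ge 0$, write $m' = qb + m$ with $0 \le m \le b-1$; then $j = ma + (qa + n')b$, and the new coefficient of $b$ is still nonnegative since $q, a, n' \ge 0$. Second, \emph{uniqueness}: if $ma + nb = m'a + n'b$ with $0 \le m, m' \le b-1$, then $(m - m')a = (n' - n)b$, so $b \mid (m-m')a$; since $\gcd(a,b) = 1$, this forces $b \mid (m - m')$, and because $|m - m'| < b$ we conclude $m = m'$ and then $n = n'$.

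Together these two claims say that the lattice points $(m,n)$ in the strip $\{0, 1, \dots, b-1\} \times \ZZ_{\ge 0}$ are in bijection with $S_0(a,b)$ via $(m,n) \mapsto ma + nb$, which is exactly the statement that the double sum above equals $\sum_{j \in S_0(a,b)} z^j$ with 0/1 coefficients.

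I expect no serious obstacle: the argument is entirely elementary once one rewrites the rational function as a finite geometric sum in $z^a$ divided by $1 - z^b$. The only delicate point is making sure the reduction $m' \mapsto m' \bmod b$ in the existence step preserves nonnegativity of the other coefficient, which it does automatically since reducing $m'$ only increases the coefficient of $b$. The coprimality hypothesis $\gcd(a,b) = 1$ enters exclusively in the uniqueness step and is what forces the right-hand side to have 0/1 rather than higher multiplicities.
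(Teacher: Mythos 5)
Your proof is correct, and it takes a genuinely different route from the paper. You expand the right-hand side directly as $\sum_{m=0}^{b-1}\sum_{n\ge 0} z^{ma+nb}$ and then show that $(m,n)\mapsto ma+nb$ is a bijection from the strip $\{0,1,\dots,b-1\}\times\ZZ_{\ge 0}$ onto $S_0(a,b)$, i.e., every representable integer has a unique canonical representation with $0\le m\le b-1$. The paper instead works with the full representation-counting function $r(a,b;j)$, whose generating function is $1/((1-z^a)(1-z^b))$, and proves the combinatorial recursion $r(a,b;j)\le 1$ for $j<ab$ together with $r(a,b;j)=r(a,b;j-ab)+1$ for $j\ge ab$; the identity then falls out of the telescoping factor $1-z^{ab}$. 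The two arguments rest on the same number-theoretic fact (the set of admissible $m$'s for a given $j$ is an arithmetic progression with common difference $b$, by coprimality), but your version is arguably cleaner and more self-contained for Theorem~\ref{thm:Sgenfct} in isolation, since it avoids introducing $r(a,b;j)$ at all. What the paper's approach buys is the recursion \eqref{eq:combex} itself, which is reused verbatim as the engine of the induction proving Theorem~\ref{thm:kgenfct} (via $j\in S_k \Leftrightarrow j-ab\in S_{k-1}$ for $j\ge ab$); if you wanted to extend your bijective argument to the sets $S_k(a,b)$, you would need to supplement it with essentially that same observation about how representations of $j$ and $j-ab$ correspond.
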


Theorem~\ref{thm:Sgenfct} seems to have first been proved by Sz\'ekely--Wormald~\cite{szekelywormald} and independently by Sert\"oz--\"Ozl\"uk~\cite{sertozozluk2}; 
its usefulness to our three original questions were noticed already in the aforementioned~\cite{brownshiue}: namely, we observe that
\[
  p_0 \left( a, b; z \right)
  \ := \ \sum_{ j \in R_0 \left( a, b \right) } z^j
  \  = \ \frac{ 1 }{ 1-z } - \frac{ 1 - z^{ ab } }{ (1-z^a) (1-z^b) }
\]
is a polynomial disguised as a rational function, and since
\begin{itemize}
\vspace{.04in}
  \item $g_0 (a, b)$ equals the degree of $p_0 \left( a, b; z \right)$,
\vspace{.04in}
  \item $\displaystyle c_0 (a, b) = \lim_{ z \to 1 } p_0 \left( a, b; z \right)$, and
\vspace{.04in}
  \item $\displaystyle s_0 (a, b) = \lim_{ z \to 1 } p_0' \left( a, b; z \right)$,
\end{itemize}
the formulas stated above can be computed by a (patient) calculus student.
Theorem~\ref{thm:Sgenfct} is at the heart of this article, and in the interest of self-containment, we will give a proof below.
It is a curious fact---and one that is the subject of the \textsc{Monthly} papers
\cite{carlitzmonthly,moreemonthly}---that we have the alternative form
\[
  \sum_{ j \in S_0 (a,b) } z^j \ = \ \frac{ \Phi_{ ab } (z) }{ 1-z } \, ,
\]
where $\Phi_n(z)$ denotes the $n$th cyclotomic polynomial.

Our goal is to extend the machinery provided by Theorem~\ref{thm:Sgenfct} and its
consequences to a recent variant of the Frobenius problem that has attracted some attention
in the research community. Namely, we consider the set $R_k \left( a_1, a_2, \dots, a_n \right)$ consisting of all integers with
exactly $k$ representations in the form~\eqref{repequ}, and ask for
\begin{itemize}
  \item the largest number $g_k \left( a_1, a_2, \dots, a_n \right)$ in $R_k \left( a_1, a_2, \dots, a_n \right)$,
  \item the cardinality $c_k \left( a_1, a_2, \dots, a_n \right)$ of $R_k \left( a_1, a_2, \dots, a_n \right)$, and
  \item the sum $s_k \left( a_1, a_2, \dots, a_n \right)$ of all elements in $R_k \left( a_1, a_2, \dots, a_n \right)$.
\end{itemize}
These are, naturally, hard questions, but there are again answers for $n=2$, both proved
in~\cite{frobnote}:\footnote{
The formula for $c_k (a, b)$ appears differently in~\cite{frobnote}; the difference stems from considering positive vs.\ nonnegative
integers.
}
\begin{itemize}
  \item $g_k (a, b) = (k+1)ab - a - b$ 
  \item $c_k (a, b) = ab$ for $k \ge 1$.
\end{itemize}
Our main contribution is the following generalization of Theorem~\ref{thm:Sgenfct}, which
will, among other things, allow us to add the missing third bulleted item to the above list.

\begin{theorem}\label{thm:kgenfct}
Given relatively prime positive integers $a$ and $b$, let $S_k (a,b)$ consist of all integers with
more than $k$ representations in the form $ma+nb$ with $m, n \in \ZZ_{ \ge 0 }$. Then
\[
  \sum_{ j \in S_k (a,b) } z^j \ = \ \frac{ z^{ abk } (1 - z^{ ab }) }{ (1-z^a) (1-z^b) } \, .
\]
Consequently, for $k \ge 1$, the polynomial indicating all integers with exactly $k$
representations is
\[
  p_k \left( a, b; z \right)
  \ := \ \sum_{ j \in R_k (a,b) } z^j
  \  = \ \frac{ z^{ ab(k-1) } (1 - z^{ ab })^2 }{ (1-z^a) (1-z^b) } \, .
\]
\end{theorem}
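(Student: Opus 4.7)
My plan is to derive Theorem~\ref{thm:kgenfct} from Theorem~\ref{thm:Sgenfct} by proving the set-theoretic identity $S_k(a,b) = abk + S_0(a,b)$, which upgrades the closed form for $S_0$ to one for every $S_k$ by a simple monomial shift. The starting observation is the geometric expansion
\[
  \frac{1}{(1-z^a)(1-z^b)} \ = \sum_{ m, n \ge 0 } z^{ m a + n b } \ = \ \sum_{ j \ge 0 } r(j) \, z^j,
\]
where $r(j) := \# \{ (m, n) \in \ZZ_{ \ge 0 }^2 : \, m a + n b = j \}$ counts representations. By definition, $S_k(a,b)$ is exactly the set of $j$ with $r(j) \ge k+1$.

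The heart of the argument is a parametrization of representations that uses the coprimality of $a$ and $b$. For any integer $j$, all integer solutions of $ma + nb = j$ take the form $(m_0 + tb, \, n_0 - ta)$ for $t \in \ZZ$, where $(m_0, n_0)$ is the unique particular solution with $0 \le m_0 < b$. The nonnegative solutions correspond to $0 \le t \le n_0/a$, so $r(j) = \lfloor n_0/a \rfloor + 1$ when $n_0 \ge 0$, and $r(j) = 0$ otherwise. Hence $r(j) \ge k+1$ reduces to $n_0 \ge ka$, which is equivalent to $j - abk = m_0 a + (n_0 - ka)\, b$ being a valid nonnegative representation---that is, to $j - abk \in S_0(a,b)$. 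This establishes the shift identity, and Theorem~\ref{thm:Sgenfct} then delivers
\[
  \sum_{ j \in S_k(a,b) } z^j \ = \ z^{abk} \sum_{ j \in S_0(a,b) } z^j \ = \ \frac{ z^{abk} (1 - z^{ab}) }{ (1-z^a)(1-z^b) } \, .
\]

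For the $p_k$ formula, I would use that a nonnegative integer has exactly $k$ representations if and only if it has more than $k-1$ but not more than $k$, i.e., $R_k(a,b) = S_{k-1}(a,b) \setminus S_k(a,b)$ for $k \ge 1$. Subtracting the two closed forms and pulling out the common factor $\frac{ z^{ab(k-1)} (1 - z^{ab}) }{ (1-z^a)(1-z^b) }$ produces the claimed $(1-z^{ab})^2$ numerator. The only real subtlety is the canonical-form step---verifying that the Bezout parametrization is tight and that the count $r(j) = \lfloor n_0/a \rfloor + 1$ is read off correctly---but this is standard, and no new ideas beyond coprimality are needed.
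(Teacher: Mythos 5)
Your proof is correct and rests on the same key fact as the paper's: because $\gcd(a,b)=1$, the representations of $j$ form a single arithmetic progression, so shifting $j$ by $ab$ changes the representation count by exactly one. The only difference is packaging---you derive the $k$-fold shift identity $S_k(a,b) = abk + S_0(a,b)$ directly from the closed form $r(j) = \lfloor n_0/a \rfloor + 1$, whereas the paper runs an induction on $k$ from the one-step recursion $r(a,b;j) = r(a,b;j-ab)+1$; both arguments then conclude identically via $R_k(a,b) = S_{k-1}(a,b) \setminus S_k(a,b)$.
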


Naturally, this theorem gives an alternative proof for the above formulas for $g_k(a,b)$ (by
computing the degree of $p_k \left( a, b; z \right)$) and $c_k(a,b)$ (by computing $\displaystyle \lim_{ z \to 1 } p_k \left( a, b; z \right)$), and because $\displaystyle s_k (a, b) = \lim_{ z \to 1 } p_k' \left( a, b; z \right)$, Theorem~\ref{thm:kgenfct} yields:

\begin{corollary}
Let $a$ and $b$ be relatively prime positive integers and $k \ge 1$. Then
$
  s_k (a, b) \ = \ \tfrac 1 2 \, ab \left( 2abk - a - b \right) .
$
\end{corollary}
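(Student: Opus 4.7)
The plan is to take the formula for $p_k(a,b;z)$ from Theorem~\ref{thm:kgenfct} and directly compute the limit $s_k(a,b) = \lim_{z\to 1} p_k'(a,b;z)$. At first glance this is a $0/0$ situation because $(1-z^{ab})^2$ vanishes to order two at $z=1$ while $(1-z^a)(1-z^b)$ also vanishes to order two. So the first step is to cancel these zeros by writing $1 - z^n = -(z-1) \cdot n \cdot g_n(z)$, where
\[
  g_n(z) \ := \ \frac{1}{n} \sum_{i=0}^{n-1} z^i
\]
satisfies $g_n(1) = 1$ and $g_n'(1) = \frac{n-1}{2}$. Substituting into the formula for $p_k(a,b;z)$ collapses the factors of $(z-1)^2$ in the numerator and denominator, leaving
\[
  p_k(a,b;z) \ = \ ab \cdot z^{ab(k-1)} \cdot \frac{g_{ab}(z)^2}{g_a(z)\, g_b(z)} \ =: \ ab \cdot h(z),
\]
a rational function with no pole at $z=1$ and with $h(1) = 1$ (which incidentally gives another check that $c_k(a,b) = ab$).

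The remaining task is the honest calculation of $h'(1)$, and the cleanest route is logarithmic differentiation. Taking logs and differentiating yields
\[
  \frac{h'(z)}{h(z)} \ = \ \frac{ab(k-1)}{z} + \frac{2 g_{ab}'(z)}{g_{ab}(z)} - \frac{g_a'(z)}{g_a(z)} - \frac{g_b'(z)}{g_b(z)} \, .
\]
Evaluating at $z=1$, using $g_n(1) = 1$ and $g_n'(1) = \tfrac{n-1}{2}$, gives
\[
  h'(1) \ = \ ab(k-1) + (ab - 1) - \frac{a-1}{2} - \frac{b-1}{2} \ = \ abk - \frac{a+b}{2} \, .
\]
Multiplying by $ab$ yields $s_k(a,b) = ab \bigl(abk - \tfrac{a+b}{2}\bigr) = \tfrac{1}{2} ab(2abk - a - b)$, as claimed.

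There is no real obstacle here beyond bookkeeping; the only subtlety is making sure the removable singularity at $z=1$ is handled before differentiating, since a naive quotient-rule attack on the original expression produces a $0/0$ mess. The reformulation in terms of $g_n(z)$ takes care of this cleanly, and from there the computation is a one-line exercise in logarithmic differentiation.
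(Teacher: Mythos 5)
Your proof is correct and is essentially the computation the paper performs: your $ab\,h(z) = ab\, z^{ab(k-1)} g_{ab}(z)^2/\bigl(g_a(z)\,g_b(z)\bigr)$ is precisely the polynomial factorization \eqref{eq:polynomialversion} of $p_k(a,b;z)$, which the paper differentiates at $z=1$ (via the operator $\Delta = z\,\frac{d}{dz}$ in the proof of Theorem~\ref{thm:powersums}, whose $m=1$ case is this corollary). Your logarithmic differentiation of the rational form and the paper's product rule applied to the factored polynomial are the same calculation in different notation, so the only (harmless) difference is that you re-derive the removability of the singularity at $z=1$ rather than quoting the already-polynomial form.
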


But Theorem~\ref{thm:kgenfct} reveals more, namely, that the integers in $R_k (a,b)$ (for $k \ge
1$) are aligned in a highly structured way, as we may write
\begin{equation}\label{eq:polynomialversion}
  p_k \left( a, b; z \right)
  \ = \ \sum_{ j \in R_k (a,b) } z^j
  \ = \ z^{ ab(k-1) } \left( 1 + z^a + z^{ 2a } + \dots + z^{ (b-1)a } \right) \left( 1 + z^b
+ z^{ 2b } + \dots + z^{ (a-1)b } \right) .
\end{equation}
Figure~\ref{fig:colorill} illustrates how the sets $R_k (a,b)$ are intertwined.

\begin{figure}[ht]
\includegraphics[totalheight=.11in]{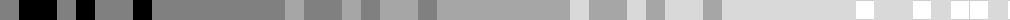}
\quad $\cdots$
\caption{The sets $R_0(3,5)$, $R_1(3,5)$, $R_2(3,5)$, etc.; the shading gets lighter as the index increases.}\label{fig:colorill}
\end{figure}

As an analogue to computing higher moments in statistics, it is natural to ask for higher power sums, or at least their nature. To this extent, we define
\[
  s_k^m \left( a_1, a_2, \dots, a_n \right) \ := \ \sum_{ j \in R_k \left( a_1, a_2, \dots, a_n \right) } j^m
\]
and offer Theorem~\ref{thm:powersums} below involving the \emph{Bernoulli polynomials} $B_n (x)$, defined as usual through
\[
  \frac{z \, e^{xz } }{e^{z } - 1 } = \sum_{n \geq 0 } \frac{B_n (x) }{ n! } \, z^n
\]
(see, e.g.,~\cite[Section~2.4]{ccd}).
The first few Bernoulli polynomials are
\begin{align*}
B_0 (x) \ &= \ 1 \, , \\
B_{1} (x) \ &= \ x - \tfrac{1}{2} \, , \\
B_{2} (x) \ &= \ x^{2} - x + \tfrac{1}{6} \, , \\
B_{3} (x) \ &= \ x^{3} - \tfrac{3}{2} x^{2} + \tfrac{1}{2} x \, , \\
B_{4} (x) \ &= \ x^{4} - 2 x^{3} + x^{2} - \tfrac{1}{30} \, , \\
B_{5} (x) \ &= \ x^{5} - \tfrac{5}{2} x^{4} + \tfrac{5}{3} x^{3} - \tfrac{1}{6} x \, , \\
B_{6} (x) \ &= \ x^{6} - 3 x^{5} + \tfrac{5}{2} x^{4} - \tfrac{1}{2} x^{2} + \tfrac{1}{42} \, .
\end{align*}
The crucial property of Bernoulli polynomials that we will need is (see, e.g.,~\cite[Lemma~2.3]{ccd})
\begin{equation}\label{eq:betacrucial}
  \beta_k(x) \ := \ \frac 1 k \left( B_k(x) - B_k(0) \right) \ = \ \sum_{ j=0 }^{ x-1 } j^{ k-1 } .
\end{equation}

\begin{theorem}\label{thm:powersums}
Let $a$ and $b$ be relatively prime positive integers, $k \ge 1$, and $m \ge 0$.
Then
\[
  s_k^m(a,b) \ = \ \sum_{ \lambda + \mu + \nu = m } \binom{ m }{ \lambda \ \mu \ \nu } a^{ \lambda + \mu } b^{ \lambda + \nu } (k-1)^\lambda
\beta_{ \nu + 1 } (a) \, \beta_{ \mu + 1 } (b) \, .
\]
\end{theorem}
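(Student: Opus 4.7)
The plan is to read the answer straight off the polynomial form in equation~\eqref{eq:polynomialversion} and reduce the problem to evaluating the power sums $\sum_{j=0}^{x-1} j^n$, for which~\eqref{eq:betacrucial} gives a closed form in Bernoulli polynomials.

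The first step is to extract from Theorem~\ref{thm:kgenfct} that
\[
  R_k(a,b) \ = \ \bigl\{ \, ab(k-1) + ia + jb : \ 0 \le i \le b-1, \ 0 \le j \le a-1 \, \bigr\},
\]
and that this enumeration has no repetitions. The factored form~\eqref{eq:polynomialversion} exhibits $p_k(a,b;z)$ as a sum of $z^{ab(k-1) + ia + jb}$ over $0 \le i \le b-1$ and $0 \le j \le a-1$, so it suffices to show that distinct pairs $(i,j)$ give distinct exponents. If $ia+jb = i'a+j'b$, then $(i-i')a = (j'-j)b$; since $\gcd(a,b)=1$ we must have $b \mid (i-i')$, and the range restriction $|i-i'| < b$ forces $i=i'$, then $j=j'$. (This also reconfirms $c_k(a,b) = ab$.)

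The second step is to write
\[
  s_k^m(a,b) \ = \ \sum_{ i=0 }^{ b-1 } \sum_{ j=0 }^{ a-1 } \bigl( ab(k-1) + ia + jb \bigr)^m
\]
and expand the parenthesized expression via the trinomial theorem, pairing the index $\lambda$ with $ab(k-1)$, the index $\mu$ with $ia$, and the index $\nu$ with $jb$:
\[
  \bigl( ab(k-1) + ia + jb \bigr)^m \ = \ \sum_{ \lambda+\mu+\nu = m } \binom{ m }{ \lambda \ \mu \ \nu } a^{\lambda+\mu} b^{\lambda+\nu} (k-1)^\lambda \, i^{\mu} j^{\nu}.
\]
Interchanging the finite sums and factoring yields a product of inner sums
\[
  \left( \sum_{ i=0 }^{ b-1 } i^{\mu} \right) \left( \sum_{ j=0 }^{ a-1 } j^{\nu} \right),
\]
and the third step is simply to apply~\eqref{eq:betacrucial}, which identifies these with $\beta_{\mu+1}(b)$ and $\beta_{\nu+1}(a)$ respectively. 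Reassembling the pieces produces exactly the asserted formula.

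There is no real obstacle here; the only point requiring care is the disjointness argument in the first step, which is what makes the set $R_k(a,b)$ behave like a genuine product grid and lets the double sum split cleanly. Once that is in hand, everything is bookkeeping: the trinomial expansion pulls $(k-1)^\lambda$, $a^{\lambda+\mu}$, and $b^{\lambda+\nu}$ out front, and~\eqref{eq:betacrucial} handles the remaining $i^\mu$ and $j^\nu$ sums.
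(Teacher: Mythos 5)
Your proof is correct and is essentially the paper's argument in different clothing: both hinge on the factorization~\eqref{eq:polynomialversion}, expand the resulting three-factor product by the multinomial theorem, and finish by substituting~\eqref{eq:betacrucial} for the two power sums. The paper packages the computation as applying the operator $\Delta = z\,\frac{d}{dz}$ a total of $m$ times to $p_k(a,b;z)$ and letting $z \to 1$, whereas you read off the explicit grid parametrization of $R_k(a,b)$ and sum $j^m$ directly---term for term the same calculation; your separate disjointness check is fine but not strictly necessary, since it already follows from the fact that $p_k(a,b;z) = \sum_{j \in R_k(a,b)} z^j$ has $0/1$ coefficients by definition while the expanded product has $ab$ terms each with coefficient $1$.
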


This generalizes the above results for $c_k(a,b)$ (which is the case $m=0$) and $s_k(a,b)$ (the case $m=1$), and it gives the asymptotic statement that $s_k^m(a,b)$ is a
polynomial in $k$ of degree $m$ with leading coefficient $(ab)^{ m+1 }$. 

There are other concepts and results hidden in our generating functions.
To give a taste, we recall that $S_k \left( a_1, a_2, \dots, a_n \right)$ consists of all integers with more than $k$ representations in the
form~\eqref{repequ}, for general $n$. Thus $\ZZ_{ \ge 0 } \setminus S_k \left( a_1, a_2, \dots, a_n \right)$ consists of all nonnegative integers with
at most $k$ representations. We define
\begin{itemize}
  \item $g_{ \le k } \left( a_1, a_2, \dots, a_n \right)$ as the maximal integer in $\ZZ_{ \ge 0 } \setminus S_k \left( a_1, a_2, \dots, a_n \right)$;
  \item $c_{ \le k } \left( a_1, a_2, \dots, a_n \right)$ as the cardinality of $\ZZ_{ \ge 0 } \setminus S_k \left( a_1, a_2, \dots, a_n \right)$;
  \item $s_{ \le k } \left( a_1, a_2, \dots, a_n \right)$ as the sum of all elements in $\ZZ_{ \ge 0 } \setminus S_k \left( a_1, a_2, \dots, a_n \right)$.
\end{itemize}
In words, $g_{ \le k } \left( a_1, a_2, \dots, a_n \right)$ is the largest integer with at most $k$ representations, $c_{ \le k } \left( a_1, a_2, \dots, a_n \right)$ is the number of integers with at most $k$ representations, and $s_{ \le k } \left( a_1, a_2, \dots, a_n \right)$ is the sum of all integers with at most $k$ representations.

The following result can be proved directly from the first part of Theorem~\ref{thm:kgenfct}.
(We note that the formulas for $g_{ \le k }(a,b)$ and $c_{ \le k }(a,b)$ are not new.)

\begin{corollary}
Let $a$ and $b$ be relatively prime positive integers and $k \ge 0$. Then
\begin{itemize}
  \item $g_{ \le k } (a, b) = (k+1)ab - a - b$;
\vspace{.04in}
  \item $c_{ \le k } (a, b) = \tfrac 1 2 (a-1)(b-1) + abk$;
\vspace{.04in}
  \item $s_{ \le k } (a, b) = \tfrac{ 1 }{ 2 } \, a^2 b^2 k^2 + \tfrac{ 1 }{ 2 } \left( ab - a - b \right) abk + \tfrac 1 6 \, a^2 b^2 
  - \tfrac 1 4 \left( a+b-1 \right) ab + \tfrac{ 1 }{ 12 } \left( a^2 + b^2 - 1 \right) . $
\end{itemize}
\end{corollary}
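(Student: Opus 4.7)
The plan is to extract a clean set-theoretic decomposition of $\ZZ_{\ge 0} \setminus S_k(a,b)$ from the first identity in Theorem~\ref{thm:kgenfct}, and then assemble the three bulleted quantities by combining elementary counting with the closed forms for $g_0(a,b)$, $c_0(a,b)$, and $s_0(a,b)$ already recorded in the introduction.

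The key observation is that Theorem~\ref{thm:kgenfct} combined with Theorem~\ref{thm:Sgenfct} gives
\[
  \sum_{j \in S_k(a,b)} z^j \ = \ \frac{z^{abk}(1-z^{ab})}{(1-z^a)(1-z^b)} \ = \ z^{abk} \sum_{j \in S_0(a,b)} z^j,
\]
an identity of generating series with 0/1 coefficients. Reading off coefficients forces the set identity $S_k(a,b) = abk + S_0(a,b)$, which in turn yields the disjoint decomposition
\[
  \ZZ_{\ge 0} \setminus S_k(a,b) \ = \ [0,abk-1] \ \sqcup \ \bigl( abk + R_0(a,b) \bigr).
\]

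From here, everything is bookkeeping. The maximum of the right-hand side is $abk + g_0(a,b) = abk + (a-1)(b-1) - 1$, giving the stated $g_{\le k}(a,b) = (k+1)ab - a - b$. Counting gives $c_{\le k}(a,b) = abk + c_0(a,b) = abk + \tfrac12(a-1)(b-1)$. For the sum, the first block contributes $\tfrac12 (abk-1)abk$, and the second block contributes $abk \cdot c_0(a,b) + s_0(a,b)$. Adding these and substituting the closed form $s_0(a,b) = \tfrac{1}{12}(a-1)(b-1)(2ab-a-b-1)$ produces the claimed quadratic-in-$k$ expression after a routine regrouping.

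No step presents a genuine obstacle: the only substantive idea is the shift $S_k = abk + S_0$, and everything else is arithmetic that a patient reader (or computer algebra system) can verify. An alternative route would work directly with the polynomial
\[
  \frac{1}{1-z} - \frac{z^{abk}(1-z^{ab})}{(1-z^a)(1-z^b)} \, ,
\]
reading $g_{\le k}$ from its degree and $c_{\le k}$, $s_{\le k}$ from its value and first derivative at $z = 1$ (exactly as in the discussion following Theorem~\ref{thm:Sgenfct}). However, the combinatorial decomposition above avoids a L'H\^opital computation and keeps the algebra noticeably cleaner.
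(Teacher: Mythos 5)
Your proposal is correct, and it takes a genuinely different route from the one the paper gestures at. The paper gives no written proof; its remark that the corollary ``can be proved directly from the first part of Theorem~\ref{thm:kgenfct}'' points to the analytic template set up after Theorem~\ref{thm:Sgenfct}: form the polynomial
\[
  \frac{1}{1-z} \ - \ \frac{z^{abk}\left(1-z^{ab}\right)}{\left(1-z^a\right)\left(1-z^b\right)} \, ,
\]
read $g_{\le k}$ from its degree, and obtain $c_{\le k}$ and $s_{\le k}$ from its value and derivative at $z=1$ via L'H\^opital-type limits --- exactly the ``alternative route'' you mention at the end. Your main argument instead extracts the set identity $S_k(a,b) = abk + S_0(a,b)$ from the factorization $\sum_{j\in S_k(a,b)} z^j = z^{abk}\sum_{j\in S_0(a,b)}z^j$ (legitimate, since both series have $0/1$ coefficients and so determine their supporting sets), and hence the disjoint decomposition $\ZZ_{\ge 0}\setminus S_k(a,b) = [0,abk-1]\sqcup\bigl(abk+R_0(a,b)\bigr)$. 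This reduces all three bullets to the classical $k=0$ data $g_0$, $c_0$, $s_0$ plus the sum $0+1+\dots+(abk-1)$; I have checked that the resulting arithmetic reproduces the stated $k$-coefficients and that the constant term $\tfrac16 a^2b^2 - \tfrac14(a+b-1)ab+\tfrac1{12}\left(a^2+b^2-1\right)$ is precisely $s_0(a,b)$ regrouped. What your approach buys is structural transparency: it avoids the limit computations entirely, makes the first two bullets immediate, and explains why $s_{\le k}$ is quadratic in $k$ with leading coefficient $\tfrac12 a^2b^2$. The only point worth a half-sentence in a final write-up is the degenerate case $\min(a,b)=1$, where $R_0(a,b)=\emptyset$ and ``the maximum of the second block'' does not exist; there $g_{\le k}$ is attained at $abk-1$, which still agrees with $(k+1)ab-a-b$ because $(a-1)(b-1)=0$.
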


We remark that $g_{ \le k } \left( a_1, a_2, \dots, a_n \right) = g_k \left( a_1, a_2, \dots, a_n \right)$ holds only for $n=2$; in fact, for general~$n$ these two invariants can differ quite a bit~\cite{frobextreme,shallitstankewiczfrobenius}.


\section{Proofs}

\begin{proof}[Proof of Theorem~\ref{thm:Sgenfct}]
Let
\[
  r(a, b; j) \ := \ \left| \left\{ (m,n) \in \ZZ_{ \ge 0 }^2 : \, ma+nb = j \right\} \right| ,
\]
the number of representations of $j$ in terms of $a$ and $b$.
By a simple geometric series argument,
\begin{equation}\label{eq:geomseries}
  \sum_{ j \ge 0 } r(a, b; j) \, z^j \ = \ \frac{ 1 }{ (1-z^a) (1-z^b) } \, .
\end{equation}

We claim that
\begin{equation}\label{eq:combex}
  r(a, b; j) \le 1 \ \text{ for } j < ab
\qquad \text{ and } \qquad
  r(a, b; j) = r(a, b; j-ab) + 1 \ \text{ for } j \ge ab \, ,
\end{equation}
and so, in particular, any integer $\ge ab$ belongs to $S_0 (a,b)$.
There are several ways to prove~\eqref{eq:combex}, for example, by considering the set
\[
  M_j \ := \ \left\{ m \in \ZZ_{ \ge 0 } : \, ma + nb = j \text{ for some } n \in \ZZ_{ \ge 0 } \right\} .
\]
Then $M_j \subset [0, \frac j a]$ (because $m, n \in \ZZ_{ \ge 0 }$), and indeed, if $m \in M_j$, then $M_j = [0, \frac j a]
\cap (m+b\ZZ)$, which follows from basic number theory (and here the condition $\gcd(a,b)=1$ is vitally important).
Thus, for $j < ab$, the set $M_j$ contains at most one element.
For $j \ge ab$, we have the implication $m \in M_{ j-ab } \Longrightarrow m \in M_j$, by replacing $n$ with $n+a$. Moreover,
the set difference $M_j \setminus M_{ j-ab } = ( \frac j a - b, \frac j a ] \cap (m+b\ZZ)$ contains precisely one point, and
\eqref{eq:combex} follows.

By~\eqref{eq:combex},
\[
  \sum_{ j \in S } z^j
  \ = \ \sum_{ j=0 }^{ ab-1 } r(a, b; j) \, z^j + \sum_{ j \ge ab } \left( r(a, b; j) - r(a, b; j-ab) \right)
z^j
  \ = \ \left( 1 - z^{ ab } \right) \sum_{ j \ge 0 } r(a, b; j) \, z^j .
\]
Theorem~\ref{thm:Sgenfct} follows now with~\eqref{eq:geomseries}.
\end{proof}

\begin{proof}[Proof of Theorem~\ref{thm:kgenfct}]
We proceed by induction on $k$; the base case is Theorem~\ref{thm:Sgenfct}.
For the induction step, assume that 
\[
 \sum_{ j \in S_{ k-1 } (a,b) } z^{j} \ = \ \frac{ z^{ ab(k-1) } (1 - z^{ ab }) }{ (1-z^a) (1-z^b) } \, .
\]
Now~\eqref{eq:combex} implies for $j \ge ab$ and $k \ge 1$
\[
  j \in S_k (a,b) \qquad \Longleftrightarrow \qquad j-ab \in S_{ k-1 } (a,b)
\]
(we stress once more that this heavily depends on $a$ and $b$ being relatively prime),
and so by induction hypothesis,
\begin{align*}
 \sum_{ j \in S_k (a,b) } z^j
 \ &= \ \sum_{ j \in S_{ k-1 } (a,b) } z^{j+ab} \\
 &= \ z^{ ab } \, \frac{ z^{ ab(k-1) } (1 - z^{ ab }) }{ (1-z^a) (1-z^b) } \\
 &=\frac{ z^{ abk } (1 - z^{ ab }) }{ (1-z^a) (1-z^b) } \, .
 \end{align*}
The formula for $p_k \left( a, b; z \right)$ now follows from the fact that $R_k (a,b) = S_{ k-1 } (a,b) \setminus S_k(a,b)$.
\end{proof}

\begin{proof}[Proof of Theorem~\ref{thm:powersums}]
We start by noting that the operator $\Delta := z \, \frac{ d }{ dz }$ is very useful in studying our power sums, as
\[
  \Delta \, z^j \ = \ z \, \frac{ d }{ dz } \, z^j \ = \ j \, z^j
\]
and thus 
\[
  s_k^m(a,b) \ = \ \lim_{ z \to 1 } \Delta^m p_k \left( a, b; z \right) \, .
\]
The operator $\Delta$ satisfies the same product rule as the derivative, and so by~\eqref{eq:polynomialversion},
\[
  \Delta^m \left( p_k \left( a, b; z \right) \right)
  \ = \ \sum_{ \lambda + \mu + \nu = m } \binom{ m }{ \lambda \ \mu \ \nu } \Delta^\lambda \left( z^{ ab(k-1) } \right) \Delta^\mu \left( \sum_{ j=0 }^{ b-1 } z^{ ja } \right) \Delta^\nu \left( \sum_{ j=0 }^{ a-1 } z^{ jb } \right) 
\]
and thus
\begin{align*}
  s_k^m(a,b) \ &= \ \sum_{ \lambda + \mu + \nu = m } \binom{ m }{ \lambda \ \mu \ \nu } \left( ab(k-1) \right)^\lambda \left( \sum_{ j=0 }^{ b-1 } (ja)^\mu \right) \left( \sum_{ j=0 }^{ a-1 } (jb)^\nu \right) \\
          &= \ \sum_{ \lambda + \mu + \nu = m } \binom{ m }{ \lambda \ \mu \ \nu } a^{ \lambda + \mu } b^{ \lambda + \nu } (k-1)^\lambda \left( \sum_{ j=0 }^{ b-1 } j^\mu
\right) \left( \sum_{ j=0 }^{ a-1 } j^\nu \right) .
\end{align*}
We finish by substituting for the expressions in the last two parentheses using~\eqref{eq:betacrucial}.
\end{proof}


\section{Musings about $n \ge 3$}

The reader might have noticed the striking similarities between the rational generating function in Theorem~\ref{thm:Sgenfct} and that in~\eqref{eq:geomseries}; however, this is an artifact of the case $n=2$. While it is true that the general counting function
\[
  r(a_1, a_2, \dots, a_n; j) \ := \ \left| \left\{ \bm \in \ZZ_{ \ge 0 }^n : \, m_1 a_1 + m_2 a_2 + \dots + m_n a_n = j \right\} \right|
\]
comes with the generating function
\[
  \sum_{ j \ge 0 } r(a_1, a_2, \dots, a_n; j) \, z^j \ = \ \frac{ 1 }{ (1-z^{ a_1 } ) (1-z^{ a_2 } ) \cdots (1-z^{ a_n } ) } \, ,
\]
and also that 
\begin{equation}\label{eq:gengenfct}
  \sum_{ j \in S_0 \left( a_1, a_2, \dots, a_n \right) } z^j \ = \ \frac{ h(z) }{ (1-z^{ a_1 } ) (1-z^{ a_2 } ) \cdots (1-z^{ a_n } ) }
\end{equation}
for some polynomial $h(z)$, the form of $h(z)$ is simple only for $n \le 2$.
At any rate, Denham \cite{denham} discovered the remarkable fact that for $n=3$, the polynomial $h(z)$
has either $4$ or $6$ terms. He gave semi-explicit formulas for $h(z)$, from which one can
deduce a semi-explicit formula for the Frobenius number $g_0(a_1, a_2, a_3)$. This formula was
independently found by Ram\'irez-Alfons\'in \cite{ramirezhilbert}. Denham's theorem implies
that the Frobenius number in the case $n=3$ is quickly computable, which was previously
known \cite{davison,greenberg,herzog}. 
Bresinsky \cite{bresinsky} proved that for $d \ge 4$, there is no absolute bound for the
number of terms in $h(z)$, in sharp contrast to Denham's theorem.

On the computational side, Barvinok--Woods \cite{barvinokwoods} proved that for fixed $n$, the
rational generating function \eqref{eq:gengenfct} can be written as a short sum of rational functions; in
particular, \eqref{eq:gengenfct} can be efficiently computed when $n$ is fixed. A
corollary of this fact is that the Frobenius number can be efficiently computed when $n$ is
fixed, a theorem originally due to Kannan \cite{kannan}. 
The analogous result for the generalized Frobenius numbers $g_k \left( a_1, a_2, \dots, a_n \right)$ is due to
Aliev--De Loera--Louveaux~\cite{alievdeloeralouveaux}.
On the other hand, Ram\'irez-Alfons\'in \cite{ramirezfrobcomplexity} proved that trying to efficiently compute the Frobenius number is hopeless if $n$ is left as a variable.

As a final note, while our results give a clear picture what kind of functions to expect for $n=2$---e.g., $s_k(a,b)$ is linear in $k$ and $s_{ \le k } (a,b)$ is quadratic in
$k$---it is unclear to us how this generalizes to $n \ge 3$. Some basic structural results would undoubtedly shed new light on generalized Frobenius numbers and their
relatives.


\bibliographystyle{amsplain}

\setlength{\parskip}{0cm} 

\end{document}